\newtheorem{thm}{Theorem}[section]
\newtheorem{cor}[thm]{Corollary}
\newtheorem{lem}[thm]{Lemma}
\journal{~}
\begin{document}
\begin{spacing}{1.15}
\begin{frontmatter}
\title{\textbf{Principal eigenvector and spectral radius of uniform hypergraphs}}

\author[label1]{Haifeng Li}
\author[label2]{Jiang Zhou}
\author[label1,label2]{Changjiang Bu}\ead{buchangjiang@hrbeu.edu.cn}

%% use optional labels to link authors explicitly to addresses:
%% \author[label1,label2]{<author name>}
%% \address[label1]{<address>}
%% \address[label2]{<address>}

\address{
\address[label1]{College of Automation, Harbin Engineering University, Harbin 150001, PR China}
\address[label2]{College of Science, Harbin Engineering University, Harbin 150001, PR China}

}

\begin{abstract}
In this paper, we give some bounds for principal eigenvector and spectral radius of connected uniform hypergraphs in terms of vertex degrees, the diameter, and the number of vertices and edges.
\end{abstract}

\begin{keyword}
Hypergraph, Spectral radius, Principal eigenvector, Principal ratio\\
\emph{AMS classification:} 05C65, 15A69, 15A18
\end{keyword}
\end{frontmatter}

\section{Introduction}
For a positive integer $n$, let $\left[ n \right] = \left\{ {1,2, \ldots ,n} \right\}$. Let $\mathbb{C}^{\left[ {m,n} \right]}$ be the set of order $m$ dimension $n$ tensors over the complex field $\mathbb{C}$. For $\mathcal{A} = \left( {a_{i_1 i_2  \cdots i_m } } \right) \in\mathbb{C}^{\left[ {m,n} \right]}$, if all the entries $a_{i_1 i_2  \cdots i_m }  \ge 0(or~  a_{i_1 i_2  \cdots i_m } >0)$ of $\mathcal{A}$, then $\mathcal{A}$ is called nonnegative (or positive) tensor. When $m = 2$, $\mathcal{A}$ is a $n \times n$ matrix. Let $\mathcal{I} = \left( {\delta _{i_1 i_2  \cdots i_m } } \right) \in \mathbb{C}^{\left[ {m,n} \right]}$ be the unit tensor, where $\delta _{i_1 i_2  \cdots i_m }$ is $Kronecker$ function.

 In 2005, Qi \cite{RA2005} and Lim \cite{Chang} defined the eigenvalue of tensors, respectively. For $\mathcal{A} = \left( {a_{i_1 i_2  \cdots i_m } } \right) \in \mathbb{C}^{\left[ {m,n} \right]}$ and $x = \left( {x_1 ,x_2 , \ldots ,x_n } \right)^\mathrm{T}  \in \mathbb{C}^n $, $\mathcal{A}x^{m - 1}$ is a dimension $n$ vector whose the $i$-th component is
\[
\left( {\mathcal{A}x^{m - 1} } \right)_i  = \sum\limits_{i_2 ,i_3 , \ldots ,i_m  = 1}^n {a_{ii_2  \cdots i_m } x_{i_2 } x_{i_3 }  \cdots x_{i_m } }.
\]
If there exists a number $\lambda  \in \mathbb{C}$, a nonzero vector $x = \left( {x_1 , \ldots ,x_n } \right)^\mathrm{T } \in \mathbb{C}^n$ such that
\[\mathcal{A}x^{m - 1}  = \lambda x^{\left[ {m - 1} \right]},\]
then $\lambda$ is called an eigenvalue of $\mathcal{A}$, $x$ is an eigenvector of $\mathcal{A}$ corresponding to the eigenvalue $\lambda$, where $x^{\left[ {m - 1} \right]}  = \left( {x_1^{m - 1} ,x_2^{m - 1} , \ldots ,x_n^{m - 1} } \right)^{\rm T}$. Let $\sigma \left( \mathcal{A} \right)$ denote the set of all eigenvalues of $\mathcal{A}$, the spectral radius $\rho \left( \mathcal{A} \right) = \mathop {\max } \left\{ {\left. {\left| {\lambda  } \right|} \right|\lambda  \in \sigma \left( \mathcal{A} \right)} \right\}$.

Chang et al. \cite{2008K.C. Chang}, Yang  et al. \cite{yang, YangaXiv2011}, Friedland et al. \cite{S.Friedland} gave Perron-Frobenius theorem of nonnegative tensors. Let $\mathcal{A} = \left( {a_{i_1 i_2  \cdots i_m } } \right)$ be an order $m$ dimension $n$ nonnegative tensor, if for any nonempty proper index subset $\alpha  \subset \left\{ {1, \ldots ,n} \right\}$, there is at least an entry
\[
a_{i_1  \cdots i_m }  > 0,~~~ where ~ i_1  \in \alpha~~ and ~at ~least ~an ~ \ i_j   \notin \alpha , ~~ j = 2 , \ldots ,m,
\]
then $\mathcal{A}$ is called nonnegative weakly irreducible tensor (see\cite{YangaXiv2011}).

Let ${V\left( G \right)} = \left\{ {\left. {1,2, \ldots ,n} \right\}} \right.$ and $E\left( G \right) = \left\{ {\left.{e_1,e_2, \ldots ,e_m} \right\}} \right.$ denote the vertex set and edge set of a hypergraph $G$, respectively. If each edge of $G$ contains exactly $k$ distinct vertices, then $G$ is called a $k$-uniform hypergraph. In particular, $2$-uniform hypergraphs are exactly the ordinary graphs. For a connected $k$-uniform hypergraph $G$, $e_i$ denotes an edge that contains vertex $i$, the degree of a vertex $i$ of $G$ is denoted by $d_i$, $\Delta = \max \{d_i\}$, $\delta = \min \{d_i\}$, for $i=1,\ldots ,n$. If all vertices of $G$ have the same degree, then $G$ is regular. A path $P$ of a $k$-uniform hypergraph is defined to be an alternating sequence of vertices and edges $P=v_{0}e_{1}v_{1}e_{2} \cdots v_{l-1}e_{l}v_{l}$, where $v_0,\ldots,v_{l}$ are distinct vertices of $G$, $e_1,\ldots,e_l$ are distinct edges of $G$ and $v_{i - 1} ,v_i  \in e_i $, for $i = 1, \ldots ,l$. The number of edges in $P$ is called the length of $P$. If there exists a path starting at $u$ and terminating at $v$ for all $u,v \in V(G)$, then $G$ is connected. Let $u$, $v$ be two distinct vertices of $G$, the distance between $u$ and $v$ is defined to be the length of the shortest path connecting them, denoted by $d(u,v)$. The diameter of a connected $k$-uniform hypergraph $G$ is the maximum distance among all vertices of $G$, denoted by $D$. In 2012,  Cooper and Dutle \cite{9} gave the concept of adjacency tensor of a $k$-uniform hypergraph. The adjacency tensor of a $k$-uniform hypergraph $G$ denoted by $\mathcal{A}_G$, is an order $k$ dimension $n$ nonnegative symmetric tensor with entries
\[a_{i_1 i_2  \cdots i_k }  = \left\{ \begin{gathered}
  \frac{1}
{{\left( {k - 1} \right)!}},{\kern 1pt} {\kern 1pt} {\kern 1pt} {\kern 1pt} {\kern 1pt} {\kern 1pt} {\kern 1pt} {\kern 1pt} {\kern 1pt} {\kern 1pt} {\kern 1pt} {\kern 1pt} {\kern 1pt} {\kern 1pt} {\kern 1pt} {\kern 1pt} {\kern 1pt} {\kern 1pt} {\kern 1pt} {\kern 1pt} {\kern 1pt} {\kern 1pt} {\kern 1pt} {\kern 1pt} {\kern 1pt} {\kern 1pt} {\kern 1pt} {\kern 1pt} {\kern 1pt} if\left\{ {\left. {i_1 ,i_2 , \ldots ,i_k } \right\}} \right. \in E\left( G \right), \hfill \\
  0,{\kern 1pt} {\kern 1pt} {\kern 1pt} {\kern 1pt} {\kern 1pt} {\kern 1pt} {\kern 1pt} {\kern 1pt} {\kern 1pt} {\kern 1pt} {\kern 1pt} {\kern 1pt} {\kern 1pt} {\kern 1pt} {\kern 1pt} {\kern 1pt} {\kern 1pt} {\kern 1pt} {\kern 1pt} {\kern 1pt} {\kern 1pt} {\kern 1pt} {\kern 1pt} {\kern 1pt} {\kern 1pt} {\kern 1pt} {\kern 1pt} {\kern 1pt} {\kern 1pt} {\kern 1pt} {\kern 1pt} {\kern 1pt} {\kern 1pt} {\kern 1pt} {\kern 1pt} {\kern 1pt} {\kern 1pt} {\kern 1pt} {\kern 1pt} {\kern 1pt} {\kern 1pt} {\kern 1pt} {\kern 1pt} {\kern 1pt} {\kern 1pt} {\kern 1pt} {\kern 1pt} {\kern 1pt} {\kern 1pt} {\kern 1pt} {\kern 1pt} {\kern 1pt} {\kern 1pt} {\kern 1pt} {\kern 1pt} {\kern 1pt} {\kern 1pt} {\kern 1pt} {\kern 1pt} {\kern 1pt} {\kern 1pt} {\kern 1pt} {\kern 1pt} {\kern 1pt} {\kern 1pt} {\kern 1pt} {\kern 1pt} {\text{otherwise}}. \hfill \\
\end{gathered}  \right.\]
Eigenvalues of $\mathcal{A}_G$ are called eigenvalues of $G$, the spectral radius of $\mathcal{A}_G$ is called the spectral radius of $G$, denoted by $\rho(G)$.

Let $G$ be a connected $k$-uniform hypergraph, then the adjacency tensor $\mathcal{\mathcal{A}}_G$ of $G$ is nonnegative weakly irreducible (see\cite{2014T. Zhang}), by Perron-Frobenius theorem of nonnegative tensors (see\cite{YangaXiv2011}), $\rho \left( G \right)$ is an eigenvalue of $\mathcal{\mathcal{A}}_G$, there exists a unique positive eigenvector $x = \left( {x_1 , \ldots ,x_n } \right)^\mathrm{T }$ corresponding to $\rho \left( G \right)$ and $\sum\limits_{i = 1}^n {x_i^k }  = 1$, $x$ is called and the principal eigenvector of $\mathcal{\mathcal{A}}_G$, the maximum and minimum entries of $x$ are denoted by $x_{max}$ and $x_{min}$, respectively. $\gamma = \frac{x_{max}}{x_{min}}$ is called the principal ratio of $\mathcal{\mathcal{A}}_G$ (see\cite{2007zhubilv}). In this paper, the principal eigenvector and the principal ratio of $\mathcal{\mathcal{A}}_G$ are called the principal eigenvector and the principal ratio of $G$. Let $\rho(G)$ be the spectral radius of a $k$-uniform hypergraph $G$ with eigenvector $x = \left( {x_1 , \ldots ,x_n } \right)^\mathrm{T }$. Since $\mathcal{A}_G x^{k-1}= \rho(G)x^{[k-1]}$, we know that $cx$ is also an eigenvector of $\rho(G)$ for any nonzero constant $c$. When $\sum\limits_{i = 1}^n {|x_i|^{k} }  = 1$, let $x^e=x_{i_1}x_{i_2}\cdots x_{i_k}$, $\{i_1,i_2,\ldots,i_k \}= e$ (see\cite{9, hypertree}), we have
\begin{eqnarray*}
\rho(G) & =&\frac{ x^{\rm{T}} (\mathcal{A}_G x^{k - 1} )}{x^{\rm{T}} (\mathcal{I} x^{k - 1} )}=x^{\rm{T}} (\mathcal{A}_G x^{k - 1} )=\sum\limits_{i_1, \ldots , i_k =1  }^{n} a_{i_1 \cdots  i_k} x_{i_1}  \cdots x_{i_k}\\
 &=& \sum\limits_{e \in E(G)} k! \frac{1}{(k-1)!} {x^e } = k\sum\limits_{e \in E(G)} {x^e }.
\end{eqnarray*}

In spectral graph theory, there are some work concern relations among the spectral radius, principal eigenvector and graph parameters \cite{2007zhubilv, 2005.Zhang.X.D}. The interest of this paper is to consider similar problems in spectral hypergraph theory. This paper is organized as follows. In Section 2, we give some bounds for the principal ratio and the maximum and minimum entries of principal eigenvector of connected uniform hypergraphs. In Section 3, we show some bounds for the spectral radius of connected uniform hypergraphs via degrees of vertices, the principal ratio and diameter.

\section{The principal eigenvector of hypergraphs}

Let $G$ be a connected $k$-uniform hypergraph, $G$ is regular if and only if  $\gamma=1$. Thus, $\gamma$ is an index which measure the irregularity of $G$. In 2005, Zhang \cite{2005.Zhang.X.D} gave some bounds of the principal ratio of irregular graph $G$, these results were used to obtain a bound of the spectral radius of $G$.

Let $G$ be a connected uniform hypergraph with maximum degree $\Delta$, minimum degree $\delta$.  We give the lower  bound for the principal ratio $\gamma$ of $G$, which extend the result of  Zhang [7, Theorem 2.3] to hypergraphs.
\begin{thm}\label{dingli3.1}
Let $G$ be a connected $k$-uniform hypergraph, then
\begin{align}\label{2.1}
\gamma  \ge \left( {\frac{\Delta }{\delta }} \right)^{\frac{1}{{2\left( {k - 1} \right)}}}.\tag{2.1}
\end{align}
If equality in $(2.1)$ holds, then $\rho \left( G \right) = \sqrt {\Delta \delta } .$
\end{thm}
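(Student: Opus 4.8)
The plan is to work directly with the eigenequation $\mathcal{A}_G x^{k-1} = \rho(G) x^{[k-1]}$, read off componentwise. Using the entries of the adjacency tensor and the fact that each edge $e$ through a vertex $i$ is counted with all $(k-1)!$ orderings of its remaining vertices, the $i$-th component of the equation becomes $\rho(G)\,x_i^{k-1} = \sum_{e \ni i} \prod_{j \in e\setminus\{i\}} x_j$, the inner sum running over the $d_i$ edges containing $i$. Since $\mathcal{A}_G$ is weakly irreducible, the principal eigenvector satisfies $x_i>0$ for all $i$ and $\rho(G)>0$, so raising positive quantities to the power $k-1$ will preserve all inequalities. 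I will apply this identity at two carefully chosen vertices: a vertex $p$ of maximum degree $d_p=\Delta$ and a vertex $q$ of minimum degree $d_q=\delta$.

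First, evaluating at $p$ and bounding each product $\prod_{j\in e\setminus\{p\}}x_j \ge x_{min}^{k-1}$ gives $\rho(G)\,x_p^{k-1}\ge \Delta\, x_{min}^{k-1}$; combined with $x_p\le x_{max}$ this yields $\rho(G)\,x_{max}^{k-1}\ge \Delta\, x_{min}^{k-1}$, i.e. $\rho(G)\ge \Delta/\gamma^{k-1}$. Symmetrically, evaluating at $q$ and bounding each product from above by $x_{max}^{k-1}$ gives $\rho(G)\,x_q^{k-1}\le \delta\, x_{max}^{k-1}$; combined with $x_q\ge x_{min}$ this yields $\rho(G)\,x_{min}^{k-1}\le \delta\, x_{max}^{k-1}$, i.e. $\rho(G)\le \delta\gamma^{k-1}$. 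Chaining the two estimates produces $\Delta/\gamma^{k-1}\le \rho(G)\le \delta\gamma^{k-1}$, and comparing the outer terms gives $\gamma^{2(k-1)}\ge \Delta/\delta$, which is exactly $(2.1)$.

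For the equality clause I observe that equality in $(2.1)$ is equivalent to $\Delta/\gamma^{k-1}=\delta\gamma^{k-1}$. This collapses the chain above and forces $\rho(G)$ to equal this common value; since then $\rho(G)=\Delta/\gamma^{k-1}$ and $\rho(G)=\delta\gamma^{k-1}$ simultaneously, multiplying the two expressions gives $\rho(G)^2=\Delta\delta$, and positivity of $\rho(G)$ yields $\rho(G)=\sqrt{\Delta\delta}$. (If desired, one can additionally unpack the equality conditions: the product bounds being tight forces every neighbour of $p$ to attain $x_{min}$ and every neighbour of $q$ to attain $x_{max}$, and $x_p=x_{max}$, $x_q=x_{min}$.)

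I do not expect a serious obstacle here, but the one point that must be gotten right is the \emph{choice of anchor vertices}. The instinctive move is to evaluate the eigenequation at the vertices realizing $x_{max}$ and $x_{min}$; however, those vertices need not carry the extreme degrees, and the resulting bounds only give the trivial $\gamma\ge 1$. Anchoring instead at the maximum- and minimum-degree vertices $p$ and $q$, and then relaxing via $x_p\le x_{max}$ and $x_q\ge x_{min}$, is precisely what makes both $\Delta$ and $\delta$ appear in the two one-sided bounds and hence produces the ratio $\Delta/\delta$.
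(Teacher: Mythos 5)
Your proof is correct and follows essentially the same route as the paper: anchor the eigenequation at a maximum-degree vertex $p$ and a minimum-degree vertex $q$, bound the edge products by $x_{\min}^{k-1}$ and $x_{\max}^{k-1}$ respectively, and combine the two estimates to get $\gamma^{2(k-1)} \ge \Delta/\delta$. Your handling of the equality case via the squeeze $\Delta/\gamma^{k-1} \le \rho(G) \le \delta\gamma^{k-1}$ is a slightly cleaner packaging than the paper's explicit tracking of which inequalities become equalities (forcing $x_p = x_{\max}$, $x_q = x_{\min}$), but it amounts to the same computation and conclusion $\rho(G) = \sqrt{\Delta\delta}$.
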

\begin{proof}
Let $G$ be a connected $k$-uniform hypergraph , $\mathcal{A}_G$ be the adjacency tensor of $G$, $x = \left( {x_1 , \ldots ,x_n } \right)^\mathrm{T}$ be the principal eigenvector of $G$. Suppose that $d_p  = \Delta$, $d_q  = \delta$, $(p,q \in V(G))$, since $\mathcal{A}_Gx^{k - 1}  = \rho \left( G \right)x^{\left[ {k - 1} \right]}$, we have
\begin{align}\label{2.2}
\rho \left( G \right)x_p^{k - 1}  = \sum\limits_{e_p  \in E\left( G \right)} {x^{e_p\backslash \left\{ p \right\}} }  \ge \Delta x_{\min }^{k - 1} .\tag{2.2}
\end{align}
\begin{align}\label{2.3}
\rho \left( G \right)x_q^{k - 1}  = \sum\limits_{e_q  \in E\left( G \right)} {x^{e_q\backslash \left\{ q \right\}} }  \le \delta x_{\max }^{k - 1} .\tag{2.3}
\end{align}
where $x^{e_i \backslash \left\{ i \right\}}  = x_{i_2 } x_{i_3 }  \cdots x_{i_k }$, $\{i, i_2,\ldots,i_k\} = e_i$.
\\By $(\ref{2.2})$ and $(\ref{2.3})$, we have
\begin{align}\label{2.4}
\frac{\Delta }{{\rho \left( G \right)}}\frac{{\rho \left( G \right)}}{\delta } \le \left( {\frac{{x_p }}{{x_{\min } }}} \right)^{k - 1} \left( {\frac{{x_{\max } }}{{x_q }}} \right)^{k - 1}  \le \left( {\frac{{x_{\max } }}{{x_{\min } }}} \right)^{2\left( {k - 1} \right)} {\rm{ = }}\gamma ^{^{2\left( {k - 1} \right)} }.\tag{2.4}
\end{align}
i.e.\[
\gamma  \ge \left( {\frac{\Delta }{\delta }} \right)^{\frac{1}{{2\left( {k - 1} \right)}}}.
\]
Since equality in $(\ref{2.1})$ holds, three equalities in (\ref{2.2}), (\ref{2.3}) and (\ref{2.4}) hold. If equality in (\ref{2.4}) holds, we have
\begin{align}\label{2.5}
x_p=x_{\max}, ~~x_q=x_{\min}.\tag{2.5}
\end{align}
When equalities in (\ref{2.2}) and (\ref{2.3}) hold, by (\ref{2.5}), we obtain
\[\rho \left( G \right) = \sqrt {\Delta \delta }.\]
\end{proof}

Applying the bound of the principal ratio $\gamma$, we obtained the result as follows.
\begin{thm}\label{zuida,zuixiao}
Let $G$ be a connected $k$-uniform hypergraph, $x = \left( {x_1 , \ldots ,x_n } \right)^\mathrm{T}$ be the principal eigenvector of $G$, then \\
$(1)$$x_{\max }  \ge \left[ {\left( {\frac{\delta }{\Delta }} \right)^{\frac{k}{{2\left( {k - 1} \right)}}}  + n - 1} \right]^{ - \left( {\frac{1}{k}} \right)}$;\\
$(2)$$x_{\min }  \le \left[ {\left( {\frac{\Delta }{\delta }} \right)^{\frac{k}{{2\left( {k - 1} \right)}}}  + n - 1} \right]^{ - \left( {\frac{1}{k}} \right)}$.
\end{thm}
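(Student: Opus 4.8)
The plan is to combine the normalization $\sum_{i=1}^n x_i^k = 1$ satisfied by the principal eigenvector with the lower bound on the principal ratio $\gamma$ already established in Theorem~\ref{dingli3.1}. Since $x$ is the positive principal eigenvector, every entry lies in the interval $[x_{\min}, x_{\max}]$, so among the $n$ terms of the sum I can isolate one copy of $x_{\max}^k$ and one copy of $x_{\min}^k$ and control the remaining $n-1$ terms.

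For part $(1)$ I would single out an index $q$ with $x_q = x_{\min}$: its contribution is exactly $x_{\min}^k$, while each of the other $n-1$ entries contributes at most $x_{\max}^k$. This gives
\[
1 = \sum_{i=1}^n x_i^k \le x_{\min}^k + (n-1)\,x_{\max}^k .
\]
Substituting $x_{\min} = x_{\max}/\gamma$ yields $1 \le x_{\max}^k\bigl(\gamma^{-k} + n - 1\bigr)$, hence $x_{\max} \ge \bigl(\gamma^{-k} + n - 1\bigr)^{-1/k}$. Invoking Theorem~\ref{dingli3.1} in the form $\gamma^{-1} \le (\delta/\Delta)^{1/(2(k-1))}$, so that $\gamma^{-k} \le (\delta/\Delta)^{k/(2(k-1))}$, and using that $t \mapsto t^{-1/k}$ is decreasing on $(0,\infty)$, the bound in $(1)$ follows.

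For part $(2)$ I would run the symmetric argument from the other side: fix an index $p$ with $x_p = x_{\max}$, whose contribution is $x_{\max}^k$, and bound each of the remaining $n-1$ entries below by $x_{\min}^k$, obtaining
\[
1 = \sum_{i=1}^n x_i^k \ge x_{\max}^k + (n-1)\,x_{\min}^k .
\]
Substituting $x_{\max} = \gamma\, x_{\min}$ gives $x_{\min} \le \bigl(\gamma^k + n - 1\bigr)^{-1/k}$, and feeding in $\gamma^k \ge (\Delta/\delta)^{k/(2(k-1))}$ together with the same monotonicity of $t \mapsto t^{-1/k}$ completes the estimate.

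The only point requiring care is tracking the direction of the inequalities through the two monotonicity steps: the map $t \mapsto t^{-1/k}$ reverses order, so the lower bound on $\gamma$ (equivalently the upper bound on $\gamma^{-1}$) must be inserted with the correct sign in each part. Everything else is a routine rearrangement once the two ``one extreme term plus $n-1$ others'' splittings of the normalization identity are set up.
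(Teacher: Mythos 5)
Your proposal is correct and follows exactly the same route as the paper: the same two splittings $1 \le x_{\min}^k + (n-1)x_{\max}^k$ and $1 \ge x_{\max}^k + (n-1)x_{\min}^k$ of the normalization, rewritten via $\gamma$, followed by inserting the lower bound on $\gamma$ from Theorem~\ref{dingli3.1} with the correct monotonicity of $t \mapsto t^{-1/k}$. No gaps; this matches the paper's argument step for step.
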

\begin{proof}
Let $G$ be a connected $k$-uniform hypergraph, $x = \left( {x_1 , \ldots ,x_n } \right)^\mathrm{T}$ be the principal eigenvector of $G$, then
\[
1{\rm{ = }}\sum\limits_{i = 1}^n {x_i^k }  \le x_{\min }^k  + \left( {n - 1} \right)x_{\max }^k.
\]
Let $\gamma$ be the principal ratio of $G$, we obtain
\[
x_{\max }^{-k} \leq {\gamma ^{ - k}  + n - 1} ,
\]
\begin{align}\label{2.6}
x_{\max }^k  \ge \left( {\gamma ^{ - k}  + n - 1} \right)^{ - 1} .\tag{2.6}
\end{align}
 By Theorem \ref{dingli3.1}, we know that $\gamma  \ge \left( {\frac{\Delta }{\delta }} \right)^{\frac{1}{{2\left( {k - 1} \right)}}}$, so
\[
x_{\max }^k    \ge \left[ {\left( {\frac{\delta }{\Delta }} \right)^{\frac{k}{{2\left( {k - 1} \right)}}}  + n - 1} \right]^{ - 1},
\]
i.e.
\begin{align}\label{2.7}
x_{\max }  \ge \left[ {\left( {\frac{\delta }{\Delta }} \right)^{\frac{k}{{2\left( {k - 1} \right)}}}  + n - 1} \right]^{ - \left( {\frac{1}{k}} \right)}.\tag{2.7}
\end{align}
Since
\[
1{\rm{ = }}\sum\limits_{i = 1}^n {x_i^k }  \ge \left( {n - 1} \right)x_{\min }^k  +x_{\max }^k,
\]
\[
x_{\min }^{-k } \ge  {\gamma ^k  + n - 1} ,
\]
\[
x_{\min }^k  \le \left( {\gamma ^k  + n - 1} \right)^{ - 1}.
\]
By Theorem \ref{dingli3.1}, we know that $\gamma  \ge \left( {\frac{\Delta }{\delta }} \right)^{\frac{1}{{2\left( {k - 1} \right)}}}$, so
\[
x_{\min }^k  \le  \left[ {\left( {\frac{\Delta }{\delta }} \right)^{\frac{k}{{2\left( {k - 1} \right)}}}  + n - 1} \right]^{ - 1},
\]
i.e.
\begin{align*}
x_{\min }  \le \left[ {\left( {\frac{\Delta }{\delta }} \right)^{\frac{k}{{2\left( {k - 1} \right)}}}  + n - 1} \right]^{ - \left( {\frac{1}{k}} \right)}.
\end{align*}
\end{proof}

\begin{thm}\label{zuida}
Let $G$ be a connected $k$-uniform hypergraph with $n$ vertices and $m$ edges, $x = \left( {x_1 , \ldots ,x_n } \right)^\mathrm{T}$ be the principal eigenvector of $G$, then
\begin{equation*}
x_{\max }  \ge  \left( {\frac{{\rho \left( {G } \right)}}{{km}}} \right)^{\frac{1}{k}},
\end{equation*}
with equality if and only if $G$ is regular.
\end{thm}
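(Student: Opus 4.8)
The plan is to exploit the Rayleigh-type identity for the spectral radius recorded in the introduction, namely $\rho(G) = k\sum_{e \in E(G)} x^e$, where $x^e = x_{i_1}x_{i_2}\cdots x_{i_k}$ for $e = \{i_1,\ldots,i_k\}$ and the principal eigenvector is normalized so that $\sum_{i=1}^{n} x_i^k = 1$. Since $x$ is the positive principal eigenvector, each factor obeys $0 < x_{i_j} \le x_{\max}$, so every edge term satisfies $x^e \le x_{\max}^{k}$. This single elementary bound is the whole engine of the proof.

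First I would sum the term-wise bound over all $m$ edges to obtain $\rho(G) = k\sum_{e \in E(G)} x^e \le k m\, x_{\max}^{k}$. Rearranging gives $x_{\max}^{k} \ge \rho(G)/(km)$, and extracting $k$-th roots yields the claimed inequality $x_{\max} \ge \left(\rho(G)/(km)\right)^{1/k}$.

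For the equality characterization I would trace back through the one inequality used. Equality forces $x^e = x_{\max}^{k}$ for \emph{every} edge $e$, which means every vertex lying in some edge must satisfy $x_i = x_{\max}$. Because $G$ is connected, each vertex belongs to at least one edge, so $x_i = x_{\max}$ for all $i$; thus $x$ is the constant vector with $x_i = n^{-1/k}$. Substituting a constant vector into $\mathcal{A}_G x^{k-1} = \rho(G)x^{[k-1]}$ gives $d_i\, x_{\max}^{k-1} = \rho(G)\, x_{\max}^{k-1}$ for each $i$, forcing every degree to equal $\rho(G)$, i.e. $G$ is regular. Conversely, if $G$ is $d$-regular, the normalized constant vector is the principal eigenvector with $\rho(G) = d$, and combining this with the handshake relation $km = nd$ gives $\rho(G)/(km) = 1/n = x_{\max}^{k}$, so equality holds.

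The only delicate point — though still routine — is the converse direction of the equality case, where I must invoke the uniqueness and positivity of the Perron eigenvector (guaranteed by the weak irreducibility of $\mathcal{A}_G$ and the Perron--Frobenius theorem quoted in the introduction) to conclude that the constant vector is genuinely \emph{the} principal eigenvector of a regular hypergraph, and not merely one candidate satisfying the eigenequation.
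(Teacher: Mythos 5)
Your proof is correct and takes essentially the same route as the paper: both start from the identity $\rho(G) = k\sum_{e \in E(G)} x^e$, bound each edge term by $x_{\max}^{k}$ to get $\rho(G) \le km\,x_{\max}^{k}$, and rearrange. In fact your handling of the equality case (tracing equality back to $x^e = x_{\max}^k$ on every edge, using connectedness to force a constant eigenvector, reading regularity off the eigenequation, and verifying the converse via $km = nd$ and Perron uniqueness) is more careful than the paper's, which compresses all of this into the single assertion that equality means $x_1 = \cdots = x_n$ and hence $G$ is regular.
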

\begin{proof}
Let $G$ be a connected $k$-uniform hypergraph with $n$ vertices and $m$ edges, $\mathcal{A}_G  = \left( {a_{i i_2  \cdots i_k } } \right)$ be the adjacency tensor of $G$, $x = \left( {x_1 , \ldots ,x_n } \right)^\mathrm{T}$ be the principal eigenvector of $G$, then
\[
\rho \left( G \right) = x^T \left( {\mathcal{A}_G x^{k - 1} } \right) = k\sum\limits_{e \in E\left( G \right)} {x^e }  \le kmx_{\max }^k.\tag{2.8}
\]
\[
x_{\max }  \ge  \left( {\frac{{\rho \left( {G } \right)}}{{km}}} \right)^{\frac{1}{k}}.\tag{2.9}
\]
Clearly, equality in (2.9) holds if and only if equality in (2.8) holds, i.e. $x_1  = x_2  =  \cdots  = x_n$, therefore $G$ is regular.
\end{proof}

\section{The spectral radius of hypergraphs}
Let $G$ be a connected uniform hypergraph with maximum degree $\Delta$, minimum degree $\delta$.  We obtain some bounds for the spectral radius of $G$ via degrees of vertices, the principal ratio and diameter.

We give some auxiliary lemmas which will be used in the sequel.

\begin{lem}\rm\cite{ChangAn.Cooper}\label{budengshi}
Let $y_1,\ldots,y_n$ be nonnegative real numbers $(n \geq 2)$, then
\begin{eqnarray*}
\frac{y_1+\cdots+y_n}{n}-(y_1\cdots y_n)^{\frac{1}{n}}\geqslant\frac{1}{n(n-1)}\sum_{1\leqslant i<j\leqslant n}(\sqrt{y_i}-\sqrt{y_j})^2.
\end{eqnarray*}
\end{lem}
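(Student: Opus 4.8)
The plan is to reduce the inequality to a single application of the arithmetic--geometric mean inequality after a square-root substitution. First I would set $z_i = \sqrt{y_i}$ for $i = 1,\ldots,n$, so that $y_i = z_i^2$ and the geometric mean $(y_1\cdots y_n)^{1/n}$ becomes $(z_1\cdots z_n)^{2/n}$. Writing $S = \sum_{i=1}^n z_i$ and $Q = \sum_{i=1}^n z_i^2$, the right-hand side simplifies through the identity $\sum_{1\le i<j\le n}(z_i - z_j)^2 = nQ - S^2$, which follows by counting that each $z_i^2$ occurs $n-1$ times and that the cross terms assemble into $S^2 - Q$.

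With this substitution the claimed inequality reads $\frac{Q}{n} - (z_1\cdots z_n)^{2/n} \ge \frac{nQ - S^2}{n(n-1)}$. Multiplying through by $n(n-1)$ and cancelling, it collapses to the clean equivalent form
\[
S^2 - Q \ge n(n-1)\,(z_1\cdots z_n)^{2/n}.
\]
Since $S^2 - Q = \left(\sum_i z_i\right)^2 - \sum_i z_i^2 = 2\sum_{i<j} z_iz_j$ and $n(n-1) = 2\binom{n}{2}$, this is in turn equivalent to
\[
\frac{1}{\binom{n}{2}}\sum_{1\le i<j\le n} z_iz_j \ge (z_1\cdots z_n)^{2/n}.
\]

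The final step is to recognise this last display as precisely AM--GM applied to the $\binom{n}{2}$ nonnegative numbers $\{\,z_iz_j : 1\le i<j\le n\,\}$: the left-hand side is their arithmetic mean, while their geometric mean is $\bigl(\prod_{i<j} z_iz_j\bigr)^{1/\binom{n}{2}}$. To identify this geometric mean with the right-hand side I would use that each $z_i$ appears in exactly $n-1$ of the pairwise products, so $\prod_{i<j} z_iz_j = (z_1\cdots z_n)^{n-1}$, and hence the geometric mean equals $(z_1\cdots z_n)^{(n-1)/\binom{n}{2}} = (z_1\cdots z_n)^{2/n}$. I expect the only real obstacle to be this exponent bookkeeping --- checking that the multiplicity $n-1$ combined with $1/\binom{n}{2} = 2/\bigl(n(n-1)\bigr)$ yields exactly the exponent $2/n$ --- since once the pairwise-product reformulation is in place the conclusion is immediate from the classical inequality.
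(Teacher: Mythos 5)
Your proof is correct. A point of comparison: the paper itself contains no proof of this lemma at all --- it is quoted from the reference of Chang, Cooper and Li (\emph{Analytic connectivity of $k$-uniform hypergraphs}), so your argument is not paralleling an internal proof but supplying a self-contained one. Each step checks out: with $z_i=\sqrt{y_i}$, $S=\sum_i z_i$, $Q=\sum_i z_i^2$, the identity $\sum_{i<j}(z_i-z_j)^2=nQ-S^2$ is right, the reduction of the claimed inequality to $S^2-Q\ge n(n-1)(z_1\cdots z_n)^{2/n}$ is a correct algebraic manipulation (multiplying by $n(n-1)>0$ and cancelling $(n-1)Q$ against $nQ$), and the finish is exactly AM--GM applied to the $\binom{n}{2}$ nonnegative pairwise products $z_iz_j$, using $\prod_{i<j}z_iz_j=(z_1\cdots z_n)^{n-1}$ and the exponent bookkeeping $(n-1)/\binom{n}{2}=2/n$, which you verify correctly. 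What your route buys is that a quantitative refinement of AM--GM (the left side exceeds the right by a ``spread'' term in the $\sqrt{y_i}$) is itself deduced from nothing more than the classical AM--GM inequality on an enlarged family of numbers; this makes the lemma elementary and the paper's dependence on the external reference removable.
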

\begin{lem}\label{zhoulem2.2}
Let $a,b,y_1,y_2$ be positive numbers. Then $a(y_1-y_2)^2+by_2^2\geqslant\frac{ab}{a+b}y_1^2$.
\end{lem}
\begin{proof}
By computation, we have
\begin{eqnarray*}
a(y_1-y_2)^2+by_2^2=(a+b)(y_2-\frac{ay_1}{a+b})^2+\frac{ab}{a+b}y_1^2\geqslant\frac{ab}{a+b}y_1^2.
\end{eqnarray*}
\end{proof}

\begin{thm}\label{definition2.6}
Let $G$ be a connected $k$-uniform hypergraph, then
\[
\frac{\Delta }{{\gamma ^{k - 1} }} \le \rho \left( G \right) \le \gamma ^{k - 1} \delta .
\]
\end{thm}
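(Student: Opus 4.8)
The plan is to read both inequalities directly off the eigenvalue equation $\mathcal{A}_G x^{k-1} = \rho(G)x^{[k-1]}$ evaluated at the extremal-degree vertices, reusing the component estimates $(2.2)$ and $(2.3)$ already established in the proof of Theorem \ref{dingli3.1}. Let $p,q \in V(G)$ with $d_p = \Delta$ and $d_q = \delta$, and recall that the principal eigenvector $x$ is positive.

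For the lower bound I would start from $(2.2)$, namely
\[
\rho(G)\,x_p^{k-1} = \sum_{e_p \in E(G)} x^{e_p \setminus \{p\}} \ge \Delta\, x_{\min}^{k-1},
\]
which gives $\rho(G) \ge \Delta\,(x_{\min}/x_p)^{k-1}$. Since $x_p \le x_{\max}$, we have $x_{\min}/x_p \ge x_{\min}/x_{\max} = \gamma^{-1}$, and therefore $\rho(G) \ge \Delta\,\gamma^{-(k-1)} = \Delta/\gamma^{k-1}$. Symmetrically, for the upper bound I would start from $(2.3)$,
\[
\rho(G)\,x_q^{k-1} = \sum_{e_q \in E(G)} x^{e_q \setminus \{q\}} \le \delta\, x_{\max}^{k-1},
\]
so that $\rho(G) \le \delta\,(x_{\max}/x_q)^{k-1}$; since $x_q \ge x_{\min}$ we get $x_{\max}/x_q \le x_{\max}/x_{\min} = \gamma$, and hence $\rho(G) \le \delta\,\gamma^{k-1}$.

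The only genuinely substantive move — and the single point where a little care is needed — is replacing the ratios $x_{\min}/x_p$ and $x_{\max}/x_q$ by the principal ratio $\gamma$, which is exactly where the estimate is deliberately loosened: the exact extremal-vertex values are traded for the global extremes $x_{\max}$ and $x_{\min}$. Everything else is immediate from positivity of $x$ together with the definitions of $\Delta$, $\delta$, and $\gamma$, so I do not expect any real obstacle here. As a consistency check, combining the two bounds yields $\Delta/\gamma^{k-1} \le \gamma^{k-1}\delta$, i.e. $\gamma^{2(k-1)} \ge \Delta/\delta$, which recovers precisely the principal-ratio bound $(2.1)$ of Theorem \ref{dingli3.1}.
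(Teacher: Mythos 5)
Your proof is correct and follows essentially the same route as the paper: evaluate the eigenvalue equation at a maximum-degree vertex and a minimum-degree vertex, bound each edge product by $x_{\min}^{k-1}$ or $x_{\max}^{k-1}$, and then relax the ratios $x_{\min}/x_p$ and $x_{\max}/x_q$ to $\gamma^{-1}$ and $\gamma$ respectively. The paper's proof of this theorem is just a restatement of the estimates (2.2) and (2.3) you reuse, so there is nothing to add.
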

\begin{proof}
Let $x = \left( {x_1 , \ldots ,x_n } \right)^\mathrm{T}$ be the principal eigenvector of $G$, for all $i \in V(G)$, we have
\[
\rho \left( G \right)x_i^{k - 1}  = \sum\limits_{e_i  \in E(G)} {x^{e_i \backslash \left\{ i \right\}} }  \ge d_i x_{\min }^{k - 1}  > 0,
\]
Suppose that $d_\mu  =\delta $, $\mu \in V(G)$, we obtain
\begin{equation*}
\rho \left(G \right) =\sum\limits_{e_\mu  \in E(G)} \frac{{x^{e_\mu \backslash \left\{ \mu \right\}} }}{{x_\mu^{k - 1} }}\leq \gamma ^{k - 1} \delta.
\end{equation*}
Similarly, we have
\begin{equation*}
\rho \left( G\right) \ge \frac{\Delta }{{\gamma ^{k - 1} }}.
\end{equation*}
Thus,
\[
\frac{\Delta }{{\gamma ^{k - 1} }} \le \rho \left( G \right) \le \gamma ^{k - 1} \delta .
\]
\end{proof}

\begin{thm}\label{zhoudingli}
Let $G$ be an irregular connected $k$-uniform hypergraph with $n$ vertices and $m$ edges, then
\begin{equation*}
\rho (G) < \frac{{km\Delta }}{{km + (n\Delta  - km){\gamma }^{-k}  + \frac{k}{{2(k - 1)D}}\left[ {1 - {\gamma }^{-\frac{k}{2}} } \right]^2 }},
\end{equation*}
where $D$ is the diameter of $G$.
\end{thm}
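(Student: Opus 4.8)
The plan is to start from the standard Rayleigh-type identity $\rho(G)=k\sum_{e\in E(G)}x^e$ established in the introduction, and to bound the sum $\sum_e x^e$ from above by $m\,x_{\max}^k$ minus a correction term that measures irregularity. More precisely, I would write
\begin{equation*}
\rho(G)=k\sum_{e\in E(G)}x^e=km\,x_{\max}^k-k\sum_{e\in E(G)}\bigl(x_{\max}^k-x^e\bigr),
\end{equation*}
so the whole problem reduces to producing a strictly positive lower bound for $\sum_{e}(x_{\max}^k-x^e)$. Comparing the resulting inequality with the claimed bound, one sees the target reformulation: I must show
\begin{equation*}
k\sum_{e\in E(G)}\bigl(x_{\max}^k-x^e\bigr)\ \ge\ \Bigl[(n\Delta-km)x_{\min}^k+\tfrac{k}{2(k-1)D}\bigl(x_{\max}^{k/2}-x_{\min}^{k/2}\bigr)^2\Bigr]\cdot(\text{something}),
\end{equation*}
after normalizing by $x_{\max}^k$ and using $\gamma^{-1}=x_{\min}/x_{\max}$. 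The right-hand denominator suggests the bound splits into a ``degree deficiency'' term governed by $n\Delta-km$ and a ``diameter'' term governed by $1-\gamma^{-k/2}$, so I would treat these two contributions separately.

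For the degree term, I would use Lemma \ref{budengshi} (the refined AM-GM inequality) edge by edge: for each edge $e=\{i_1,\dots,i_k\}$, applying the lemma with $y_j=x_{i_j}^k$ gives $x_{\max}^k-x^e\ge \frac{1}{k}\sum_j x_{i_j}^k - x^e \ge$ a sum of squared differences $(x_{i_a}^{k/2}-x_{i_b}^{k/2})^2$. Summing over all edges and counting how many edges meet each vertex recovers the vertex-degree data, and the slack $n\Delta-km=\sum_i(\Delta-d_i)$ appears naturally because $\sum_i d_i=km$. This is where the factor $(n\Delta-km)\gamma^{-k}$ comes from, bounding the deficiency below by $(n\Delta-km)x_{\min}^k$.

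For the diameter term I would pick vertices $u,v$ with $x_u=x_{\max}$ and $x_v=x_{\min}$ and choose a shortest path $P=v_0e_1v_1\cdots e_lv_l$ of length $l\le D$ joining them. Telescoping, $x_{\max}^{k/2}-x_{\min}^{k/2}=\sum_{t=1}^{l}(x_{v_{t-1}}^{k/2}-x_{v_t}^{k/2})$, so by Cauchy--Schwarz $\bigl(x_{\max}^{k/2}-x_{\min}^{k/2}\bigr)^2\le l\sum_t (x_{v_{t-1}}^{k/2}-x_{v_t}^{k/2})^2\le D\sum_t(\dots)^2$; each summand is one of the squared-difference terms already produced by Lemma \ref{budengshi} on the edge $e_t$, which accounts for the $\frac{1}{2(k-1)D}$ factor. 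The main obstacle, and the step requiring the most care, is bookkeeping: the squared-difference terms from Lemma \ref{budengshi} must simultaneously supply both the degree-deficiency bound and the path bound without double-counting, and here Lemma \ref{zhoulem2.2} is the tool that lets me combine a path-edge contribution $a(y_1-y_2)^2$ with a residual $by_2^2$ into a clean lower bound of the form $\frac{ab}{a+b}y_1^2$, thereby merging the two estimates into the single denominator appearing in the statement. Finally, since $G$ is irregular we have $\gamma>1$, which forces at least one of the two correction terms to be strictly positive and yields the strict inequality.
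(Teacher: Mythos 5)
Your middle ingredients match the paper's: the edge\mbox{-}wise refined AM--GM (Lemma \ref{budengshi}) and a telescoping/Cauchy--Schwarz estimate along a shortest $u$--$v$ path are exactly the paper's steps. But the frame you put them in has a genuine gap. The paper anchors its decomposition at $\Delta$ via the normalization $\sum_{i=1}^n x_i^k=1$, writing
\[
\Delta-\rho(G)=\sum_{i=1}^n(\Delta-d_i)x_i^k+\sum_{e\in E(G)}\Bigl(\sum_{j\in e}x_j^k-kx^e\Bigr),
\]
so the degree term is bounded termwise by $(n\Delta-km)x_{\min}^k$ since $\Delta-d_i\ge 0$. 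Your anchor is $km\,x_{\max}^k$ instead, and expanding your quantity gives
\[
k\sum_{e\in E(G)}\bigl(x_{\max}^k-x^e\bigr)=\sum_{i=1}^n d_i\bigl(x_{\max}^k-x_i^k\bigr)+\sum_{e\in E(G)}\Bigl(\sum_{j\in e}x_j^k-kx^e\Bigr),
\]
where $\sum_i d_i(x_{\max}^k-x_i^k)$ is a genuinely different quantity from $\sum_i(\Delta-d_i)x_i^k$: it is a degree\mbox{-}weighted deficiency of the entries, not an entry\mbox{-}weighted deficiency of the degrees, and the claimed slack $(n\Delta-km)x_{\min}^k$ does not ``appear naturally'' from it. Worse, your identity measures how far $\rho(G)$ sits below $km\,x_{\max}^k$, while the target has $km\Delta$ in the numerator, so at some point you must compare $km\,x_{\max}^k$ with $\Delta$; but $km\,x_{\max}^k$ can vastly exceed $\Delta$ (a hypergraph star already does this), and the only unconditional upper bound $x_{\max}^k\le 1$ gives $\rho(G)\le km-C$, which does not imply $\rho(G)<\frac{km\Delta}{km+C}$. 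This is exactly why your reformulated target ends in an unspecified ``(something)'': with the bracket as written, no choice of that factor closes the implication. The inequality that would close it, namely $\sum_i d_i(x_{\max}^k-x_i^k)\ge(n\Delta-km)x_{\min}^k+km\,x_{\max}^k-\Delta$, is equivalent to the paper's degree bound only after reinstating $\sum_i x_i^k=1$ --- at which point you are redoing the paper's proof.

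Two further points. First, Lemma \ref{zhoulem2.2} is the wrong tool for this theorem: merging the two correction terms via $a(y_1-y_2)^2+by_2^2\ge\frac{ab}{a+b}y_1^2$ eliminates $\gamma$ and produces the weaker bounds of Theorems \ref{jieguo1} and \ref{jieguo2}, whereas the denominator here keeps $(n\Delta-km)\gamma^{-k}$ and the diameter term as separate summands; what actually finishes the proof is dividing $(\ref{3.3})$ by $x_u^k$ to get $(\ref{3.4})$ and then invoking Theorem \ref{zuida}, $x_u^k\ge\rho(G)/(km)$, which is what manufactures the $km$ in the denominator --- a step absent from your sketch. Second, your path estimate telescopes only over the consecutive vertices $v_{t-1},v_t$, which after Lemma \ref{budengshi} yields the constant $\frac{1}{(k-1)D}$; the stated constant $\frac{k}{2(k-1)D}$ requires additionally using the $k-2$ interior vertices of each path edge via $(a-c)^2+(c-b)^2\ge\frac12(a-b)^2$, as the paper does, so for $k>2$ your argument proves a strictly weaker inequality than the one claimed.
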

\begin{proof}
Let $G$ be an irregular connected $k$-uniform hypergraph with $n$ vertices and $m$ edges, $x = \left( {x_1 , \ldots ,x_n } \right)^\mathrm{T}$ is the principal eigenvector of $G$, then
\begin{eqnarray*}
\Delta-\rho(G)&=&\Delta\sum_{i=1}^nx_i^k-k\sum_{e\in E(G)}x^e\\
&=&\sum_{i=1}^n(\Delta-d_i)x_i^k+\sum_{i=1}^nd_ix_i^k-k\sum_{e\in E(G)}x^e\\
&=&\sum_{i=1}^n(\Delta-d_i)x_i^k+\sum_{\{i_1\cdots i_k \}= e \in E(G)}(x_{i_1}^k+\cdots+x_{i_k}^k-kx^e).
\end{eqnarray*}
Let $x_u  = \mathop {\max }\limits_i x_i $, $x_v  = \mathop {\min }\limits_i x_i  $, since $x> 0 $, $G$ be an irregular connected $k$-uniform hypergraph, by Lemma \ref{budengshi}, it yields that
\begin{equation}\label{3.1}
\Delta-\rho(G)>(n\Delta-km)x_v^k+\frac{1}{k-1}\sum_{i,j\in e\in E(G)}(x_i^{\frac{k}{2}}-x_j^{\frac{k}{2}})^2.\tag{3.1}
\end{equation}
Let $P=v_{0}e_{1}v_{1}e_{2} \cdots v_{l-1}e_{l}v_{l}$ be the shortest path from vertex $u$ to vertex $v$, where $u=v_{0}$, $v=v_{l}$，$v_{i - 1} ,v_i  \in e_i ,for~i = 1, \ldots ,l$, we have
\begin{eqnarray*}
\sum_{i,j\in e\in E(P)}(x_i^{\frac{k}{2}}-x_j^{\frac{k}{2}})^2
&\geq& \sum_{i=0}^{l-1}(x_{v_i}^{\frac{k}{2}}-x_{v_{i+1}}^{\frac{k}{2}})^2\\
&+&\sum_{i=0}^{l-1}\sum_{u_i\in e_i\backslash\{v_{i-1},v_i\}}[(x_{v_i}^{\frac{k}{2}}-x_{u_{i+1}}^{\frac{k}{2}})^2+(x_{u_{i+1}}^{ \frac{k}{2}}-x_{v_{i+1}}^{\frac{k}{2}})^2]\\
&\geq& \sum_{i=0}^{l-1}(x_{v_i}^{\frac{k}{2}}-x_{v_{i+1}}^{\frac{k}{2}})^2+\frac{1}{2}\left(\sum_{i=0}^{ l-1}\sum_{u_i\in e_i\backslash\{v_{i-1},v_i\}}(x_{v_i}^{\frac{k}{2}}-x_{v_{i+1}}^{\frac{k}{2}})^2\right)\\
&=&\sum_{i=0}^{l-1}(x_{v_i}^{\frac{k}{2}}-x_{v_{i+1}}^{\frac{k}{2}})^2+\frac{k-2}{2}\left( \sum_{i=0}^{l-1}(x_{v_i}^{\frac{k}{2}}-x_{v_{i+1}}^{\frac{k}{2}})^2\right).
\end{eqnarray*}
By Cauchy-Schwarz inequality, we obtain
\begin{eqnarray*}
\sum_{i,j\in e\in E(P)}(x_i^{\frac{k}{2}}-x_j^{\frac{k}{2}})^2
&\geq&\frac{1}{l}\left(\sum_{i=0}^{l-1}(x_{v_i}^{\frac{k}{2}}-x_{v_{i+1}}^{\frac{k}{2}})\right)^2+ \frac{k-2}{2l}\left(\sum_{i=0}^{l-1}(x_{v_i}^{\frac{k}{2}}- x_{v_{i+1}}^{ \frac{k}{2}})\right)^2\\
&=&\frac{1}{l}(x_u^{\frac{k}{2}}-x_v^{\frac{k}{2}})^2+\frac{k-2}{2l}(x_u^{\frac{k}{2}}-x_v^{ \frac{k}{2}})^2\\
&=&\frac{k}{2l}(x_u^{\frac{k}{2}}-x_v^{\frac{k}{2}})^2.
\end{eqnarray*}
Let $D$ is the diameter of $G$, since $l=d(u,v)\leq D$, so
\begin{equation}\label{3.2}
\sum_{i,j\in e\in E(P)}(x_i^{\frac{k}{2}}-x_j^{\frac{k}{2}})^2 \geq \frac{k}{2D}(x_u^{\frac{k}{2}}-x_v^{\frac{k}{2}})^2.\tag{3.2}
\end{equation}
By (\ref{3.1}) and (\ref{3.2}), it yields that
\begin{align}\label{3.3}
\Delta-\rho(G)>(n\Delta-km)x_v^k+\frac{k}{2(k-1)D}(x_u^{\frac{k}{2}}-x_v^{\frac{k}{2}})^2.\tag{3.3}
\end{align}
Let $\gamma$ be the principal ratio of $G$, we have
\begin{align}\label{3.4}
\frac{{\Delta  - \rho (G)}}{{x_u^k }} > (n\Delta  - km){\gamma }^{-k}  + \frac{k}{{2(k - 1)D}}\left[ {1 - {\gamma }^{-\frac{k}{2}} } \right]^2.\tag{3.4}
\end{align}
It follows from Theorem \ref{zuida} that $x_{u} ^{k} \ge   {\frac{{\rho \left( {G } \right)}}{{km}}} $, so
\[
\frac{{\left( {\Delta  - \rho (G)} \right)km}}{{\rho (G)}} > \frac{{\Delta  - \rho (G)}}{{x_\mu ^k }} > (n\Delta  - km)\gamma ^{ - k}  + \frac{k}{{2(k - 1)D}}\left[ {1 - \gamma ^{ - \frac{k}{2}} } \right]^2,
\]

\[
\rho (G) < \frac{{km\Delta }}{{km + (n\Delta  - km){\gamma }^{-k}  + \frac{k}{{2(k - 1)D}}\left[ {1 - {\gamma }^{-\frac{k}{2}} } \right]^2 }}.
\]
\end{proof}

\begin{thm}
Let $G$ be an irregular connected $k$-uniform hypergraph with $n$ vertices and $m$ edges, $D$ is the diameter of $G$, then
\[
 \rho (G) < \Delta- \frac{{2(k - 1)D(n\Delta  - km)\gamma ^{ - k}  + k\left( {1 - \gamma ^{ - \frac{k}{2}} } \right)^2 }}{{2\left( {\gamma ^{ - k}  + n - 1} \right)(k - 1)D}}.
\]
\end{thm}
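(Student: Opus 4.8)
The plan is to reuse inequality~(\ref{3.4}), established inside the proof of Theorem~\ref{zhoudingli}, but to substitute a different lower estimate for $x_{\max}^k$ than the one employed there. Recall that~(\ref{3.4}) states
\[
\frac{\Delta - \rho(G)}{x_u^k} > (n\Delta - km)\gamma^{-k} + \frac{k}{2(k-1)D}\left[1 - \gamma^{-\frac{k}{2}}\right]^2,
\]
where $x_u = x_{\max}$. I would first denote the right-hand side by $C$ and observe that $C > 0$: since $G$ is $k$-uniform we have $\sum_i d_i = km$, and since $G$ is irregular not every $d_i$ equals $\Delta$, so $n\Delta > km$; hence the first summand of $C$ is strictly positive while the second is nonnegative. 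Multiplying~(\ref{3.4}) through by $x_u^k > 0$ then gives $\Delta - \rho(G) > C\,x_{\max}^k$.

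Next I would feed in the lower bound $x_{\max}^k \ge (\gamma^{-k} + n - 1)^{-1}$, which is exactly inequality~(\ref{2.6}) from the proof of Theorem~\ref{zuida,zuixiao}. Because $C > 0$, this estimate is compatible with the direction of the inequality and yields
\[
\Delta - \rho(G) > C\,x_{\max}^k \ge \frac{C}{\gamma^{-k} + n - 1},
\]
so that $\rho(G) < \Delta - C(\gamma^{-k}+n-1)^{-1}$.

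Finally I would carry out the routine algebraic cleanup. Writing $C$ over the common denominator $2(k-1)D$ gives
\[
C = \frac{2(k-1)D(n\Delta - km)\gamma^{-k} + k\left(1 - \gamma^{-\frac{k}{2}}\right)^2}{2(k-1)D},
\]
and dividing by $\gamma^{-k}+n-1$ reproduces precisely the stated bound. I do not expect a genuine obstacle: the entire argument is Theorem~\ref{zhoudingli} with the crude estimate $x_{\max}^k \ge \rho(G)/(km)$ replaced by the sharper $x_{\max}^k \ge (\gamma^{-k}+n-1)^{-1}$. The one point demanding attention is the verification that $C > 0$, since that is what guarantees the substitution of the lower bound for $x_{\max}^k$ preserves the strict inequality; once that is secured, the rest is bookkeeping.
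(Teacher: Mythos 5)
Your proposal is correct and follows essentially the same route as the paper: the paper likewise combines inequality (3.4) from the proof of Theorem 3.4 with the bound $x_u^k \ge (\gamma^{-k}+n-1)^{-1}$ from (2.6), then rewrites over a common denominator. Your explicit check that the right-hand side of (3.4) is positive (via $n\Delta > km$ for an irregular hypergraph) is a detail the paper leaves implicit, but it is needed to justify substituting the lower bound for $x_{\max}^k$, so it is a welcome addition rather than a deviation.
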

\begin{proof}
Let $x = \left( {x_1 , \ldots ,x_n } \right)^\mathrm{T }$, $\gamma$ be the principal eigenvector and the principal ratio of $G$, respectively, $x_u  = \mathop {\max }\limits_i x_i $, by (\ref{3.4}), we know
\begin{eqnarray*}
\frac{{\Delta  - \rho (G)}}{{x_u^k }} > (n\Delta  - km){\gamma }^{-k}  + \frac{k}{{2(k - 1)D}}\left[ {1 - {\gamma }^{-\frac{k}{2}} } \right]^2.
\end{eqnarray*}
By (\ref{2.6}), we have
\begin{eqnarray*}
x_{u }^k  \ge \left( {\gamma ^{ - k}  + n - 1} \right)^{ - 1} .
\end{eqnarray*}
Thus
\begin{eqnarray*}
{{\Delta  - \rho (G)}}&>& \frac{(n\Delta  - km){\gamma }^{-k}  + \frac{k}{{2(k - 1)D}}\left[ {1 - {\gamma }^{-\frac{k}{2}} } \right]^2}{{\gamma ^{ - k}  + n - 1}}\\
&= &\frac{{2(k - 1)D(n\Delta  - km)\gamma ^{ - k}  + k\left( {1 - \gamma ^{ - \frac{k}{2}} } \right)^2 }}{{2\left( {\gamma ^{ - k}  + n - 1} \right)(k - 1)D}}.
\end{eqnarray*}
\[
 \rho (G) < \Delta- \frac{{2(k - 1)D(n\Delta  - km)\gamma ^{ - k}  + k\left( {1 - \gamma ^{ - \frac{k}{2}} } \right)^2 }}{{2\left( {\gamma ^{ - k}  + n - 1} \right)(k - 1)D}}.
\]
\end{proof}

\begin{thm}\label{jieguo1}
Let $G$ be an irregular connected $k$-uniform hypergraph with $n$ vertices and $m$ edges, $D$ is the diameter of $G$, then
\[
\rho(G)<\Delta - \frac{k(n\Delta-km)} {\left[{2(k-1)D(n\Delta-km)+k} \right]\left[{\left( {\frac{\delta }{\Delta }} \right)^{\frac{k}{{2\left( {k - 1} \right)}}}  + n - 1}\right]}.
\]
\end{thm}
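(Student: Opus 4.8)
The plan is to begin from the strict inequality $(\ref{3.4})$ obtained in the proof of Theorem \ref{zhoudingli},
\[
\frac{\Delta-\rho(G)}{x_u^k} > (n\Delta-km)\gamma^{-k}+\frac{k}{2(k-1)D}\left(1-\gamma^{-\frac{k}{2}}\right)^2,
\]
and to remove every occurrence of the principal ratio $\gamma$, replacing it by quantities depending only on $\Delta,\delta,n,m,D,k$. There are two appearances of $\gamma$: one inside the bracketed expression on the right, and one hidden in the factor $x_u^k=x_{\max}^k$. I would treat them separately, which is why the bound is stated in terms of $\Delta$ and $\delta$ rather than $\gamma$ as in the preceding theorem.

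For the bracket, the key observation is that it is a quadratic form in $\gamma^{-k/2}$ to which Lemma \ref{zhoulem2.2} applies verbatim. Taking $a=\frac{k}{2(k-1)D}$, $b=n\Delta-km$, $y_1=1$ and $y_2=\gamma^{-\frac{k}{2}}$, all of which are positive --- for an irregular hypergraph the handshake identity $km=\sum_{i}d_i$ forces $km<n\Delta$, so $b>0$ --- the lemma gives
\[
(n\Delta-km)\gamma^{-k}+\frac{k}{2(k-1)D}\left(1-\gamma^{-\frac{k}{2}}\right)^2\ \ge\ \frac{ab}{a+b}=\frac{k(n\Delta-km)}{2(k-1)D(n\Delta-km)+k},
\]
a bound that is free of $\gamma$. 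Consequently $\Delta-\rho(G)>x_u^k\cdot\dfrac{k(n\Delta-km)}{2(k-1)D(n\Delta-km)+k}$.

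To eliminate the remaining factor I would invoke $(\ref{2.7})$, namely $x_{\max}^k\ge\big[(\delta/\Delta)^{k/(2(k-1))}+n-1\big]^{-1}$. Since the $\gamma$-free quantity $\frac{k(n\Delta-km)}{2(k-1)D(n\Delta-km)+k}$ is strictly positive, multiplying by it preserves the inequality, and I obtain
\[
\Delta-\rho(G)>\frac{k(n\Delta-km)}{\big[2(k-1)D(n\Delta-km)+k\big]\big[(\delta/\Delta)^{k/(2(k-1))}+n-1\big]};
\]
rearranging yields the stated bound.

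I expect no serious obstacle: the single substantive step is recognizing that Lemma \ref{zhoulem2.2} furnishes exactly the unconstrained minimum of the quadratic in $\gamma^{-k/2}$, so that the constraint $\gamma\ge(\Delta/\delta)^{1/(2(k-1))}$ from Theorem \ref{dingli3.1} is needed only to bound $x_{\max}^k$ via $(\ref{2.7})$, not the bracket. The one point requiring care is the strictness of the final inequality, which survives because $(\ref{3.4})$ is strict while the two auxiliary bounds, though possibly non-strict, only ever multiply positive quantities.
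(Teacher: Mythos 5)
Your proposal is correct and is essentially the paper's own proof: the paper applies Lemma \ref{zhoulem2.2} with $a=\frac{k}{2(k-1)D}$, $b=n\Delta-km$, $y_1=x_u^{k/2}$, $y_2=x_v^{k/2}$ directly to (\ref{3.3}), which is exactly your application to the bracket of (\ref{3.4}) after multiplying through by $x_u^k$ (the lemma is homogeneous in $(y_1,y_2)$), and it then finishes with the same bound (\ref{2.7}) on $x_{\max}^k$. Your added remarks---that $n\Delta-km>0$ for irregular $G$ and that strictness survives---are correct and only make explicit what the paper leaves implicit.
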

\begin{proof}
Let $G$ be an irregular connected $k$-uniform hypergraph with $n$ vertices and $m$ edges, Let $x = \left( {x_1 , \ldots ,x_n } \right)^\mathrm{T }$, $\gamma$ be the principal eigenvector and the principal ratio of $G$, respectively, $x_u  = \mathop {\max }\limits_i x_i $, $x_v  = \mathop {\min }\limits_i x_i $, by (\ref{3.3}), we know
\begin{eqnarray*}
\Delta-\rho(G)>(n\Delta-km)x_v^k+\frac{k}{2(k-1)D}(x_u^{\frac{k}{2}}-x_v^{\frac{k}{2}})^2.
\end{eqnarray*}
By Lemma \ref{zhoulem2.2}, we get
\begin{align}\label{zhoujieguo}
\Delta-\rho(G)>\frac{k(n\Delta-km)}{2(k-1)D(n\Delta-km)+k}x_u^k.\tag{3.5}
\end{align}
By Theorem \ref{zuida,zuixiao}, we have
\[
x_{u}  \ge \left[ {\left( {\frac{\delta }{\Delta }} \right)^{\frac{k}{{2\left( {k - 1} \right)}}}  + n - 1} \right]^{ - \left( {\frac{1}{k}} \right)}.
\]
Thus,
\begin{eqnarray*}
\Delta-\rho(G)>\frac{k(n\Delta-km)} {\left[{2(k-1)D(n\Delta-km)+k} \right]\left[{\left( {\frac{\delta }{\Delta }} \right)^{\frac{k}{{2\left( {k - 1} \right)}}}  + n - 1}\right]}.
\end{eqnarray*}
i.e.
\[
\rho(G)<\Delta - \frac{k(n\Delta-km)} {\left[{2(k-1)D(n\Delta-km)+k} \right]\left[{\left( {\frac{\delta }{\Delta }} \right)^{\frac{k}{{2\left( {k - 1} \right)}}}  + n - 1}\right]}.
\]
\end{proof}

\begin{thm}\label{jieguo2}
Let $G$ be an irregular connected $k$-uniform hypergraph with $n$ vertices and $m$ edges, then
\[
\rho(G)<\frac{2m\Delta(k-1)D(n\Delta-km)+km\Delta}{ 2m(k-1)D(n\Delta-km)+n\Delta},
\]
where $D$ is the diameter of $G$.
\end{thm}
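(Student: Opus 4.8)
The plan is to combine the intermediate inequality (3.5) obtained in the proof of Theorem \ref{jieguo1} with the lower bound on $x_{\max}$ supplied by Theorem \ref{zuida}. Recall that applying Lemma \ref{zhoulem2.2} to (\ref{3.3}) produced
\[
\Delta-\rho(G)>\frac{k(n\Delta-km)}{2(k-1)D(n\Delta-km)+k}\,x_u^k,
\]
where $x_u=\max_i x_i$. In Theorem \ref{jieguo1} this factor $x_u^k$ was estimated from below through the principal ratio and Theorem \ref{zuida,zuixiao}; here I would instead invoke Theorem \ref{zuida}, which gives $x_u^k=x_{\max}^k\ge \rho(G)/(km)$.

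First I would record the sign of the relevant coefficient. For a $k$-uniform hypergraph the degree sum equals $km$, so the average degree is $km/n$; since $G$ is irregular, $\Delta$ strictly exceeds this average and hence $n\Delta-km>0$. Consequently the factor $\dfrac{k(n\Delta-km)}{2(k-1)D(n\Delta-km)+k}$ is positive, and replacing $x_u^k$ by the smaller positive quantity $\rho(G)/(km)$ preserves the strict inequality.

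Substituting this bound into (3.5) yields
\[
\Delta-\rho(G)>\frac{k(n\Delta-km)}{2(k-1)D(n\Delta-km)+k}\cdot\frac{\rho(G)}{km}
=\frac{(n\Delta-km)\,\rho(G)}{m\bigl[2(k-1)D(n\Delta-km)+k\bigr]}.
\]
Setting $B=2(k-1)D(n\Delta-km)+k$ and clearing the positive denominator $Bm$ turns this into $Bm(\Delta-\rho(G))>(n\Delta-km)\rho(G)$, that is, $Bm\Delta>\rho(G)\bigl[Bm+n\Delta-km\bigr]$. Solving for $\rho(G)$ and expanding $Bm=2m(k-1)D(n\Delta-km)+km$, so that $Bm\Delta=2m\Delta(k-1)D(n\Delta-km)+km\Delta$ and $Bm+n\Delta-km=2m(k-1)D(n\Delta-km)+n\Delta$, gives precisely the asserted inequality.

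Since both ingredients are already in hand, the derivation is a routine algebraic rearrangement rather than a substantial argument; the only step requiring attention is confirming $n\Delta-km>0$, which guarantees that the coefficient remains positive and that the strict inequality is not reversed when $x_u^k$ is replaced by its lower bound. I do not anticipate any genuine obstacle.
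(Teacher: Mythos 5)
Your proposal is correct and follows essentially the same route as the paper's own proof: both combine inequality (3.5) from the proof of Theorem \ref{jieguo1} with the bound $x_{\max}^k \ge \rho(G)/(km)$ from Theorem \ref{zuida}, then rearrange algebraically. Your explicit verification that $n\Delta - km > 0$ (so the substitution preserves the strict inequality) is a point the paper leaves implicit, but it does not change the argument.
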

\begin{proof}
Let $x = \left( {x_1 , \ldots ,x_n } \right)^\mathrm{T }$, $\gamma$ be the principal eigenvector and the principal ratio of $G$, respectively, $x_u  = \mathop {\max }\limits_i x_i $, by (\ref{zhoujieguo}), we know
\[
\Delta-\rho(G)>\frac{k(n\Delta-km)}{2(k-1)D(n\Delta-km)+k}x_u^k.
\]
By Theorem \ref{zuida}, we have
\begin{equation*}
x_{u}  \ge  \left( {\frac{{\rho \left( {G } \right)}}{{km}}} \right)^{\frac{1}{k}},
\end{equation*}
Thus
\[
km(\Delta-\rho(G))>\frac{k(n\Delta-km)\rho(G)}{2(k-1)D(n\Delta-km)+k}.
\]
\[
\rho(G)<\frac{km\Delta}{km+\frac{k(n\Delta-km)}{2(k-1)D(n\Delta-km)+k}}=\frac{2m\Delta(k-1)D(n\Delta-km)+km\Delta}{ 2m(k-1)D(n\Delta-km)+n\Delta}.
\]
\end{proof}

Let $G$ be an irregular connected $k$-uniform hypergraph with $n$ vertices and $m$ edges, when $k=2$, by Theorem \ref{jieguo1} and Theorem \ref{jieguo2}, we can obtain results as follows.
\begin{cor}\label{graphone}
Let $G$ be an irregular connected graph with $n$ vertices and $m$ edges, $D$ is the diameter of $G$, then
\[
\Delta - \rho(G)> \frac{n\Delta-2m} {\left[{D(n\Delta-2m)+1} \right]\left[ {\frac{\delta }{\Delta } + n - 1}\right]} >\frac{n\Delta-2m}{n(D(n\Delta-2m)+1)}.
\]
\end{cor}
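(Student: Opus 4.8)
The plan is to obtain this corollary as the $k=2$ specialization of Theorem \ref{jieguo1}, followed by an elementary comparison of denominators that uses only the irregularity of $G$.

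First I would substitute $k=2$ into the bound of Theorem \ref{jieguo1} and simplify the three $k$-dependent quantities. The numerator $k(n\Delta-km)$ becomes $2(n\Delta-2m)$; the first bracket $2(k-1)D(n\Delta-km)+k$ becomes $2D(n\Delta-2m)+2=2[D(n\Delta-2m)+1]$; and the exponent $\frac{k}{2(k-1)}$ becomes $1$, so that $\left(\frac{\delta}{\Delta}\right)^{\frac{k}{2(k-1)}}$ collapses to $\frac{\delta}{\Delta}$. After cancelling the common factor $2$ I would arrive at
\[
\Delta-\rho(G)>\frac{n\Delta-2m}{\left[D(n\Delta-2m)+1\right]\left[\frac{\delta}{\Delta}+n-1\right]},
\]
which is precisely the first inequality of the corollary.

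For the second inequality I would compare the two denominators, both of which share the positive factor $D(n\Delta-2m)+1$; positivity here follows from $2m=\sum_i d_i<n\Delta$, which holds because $G$ is irregular and so not every degree equals $\Delta$. Again by irregularity we have $\delta<\Delta$, hence $\frac{\delta}{\Delta}<1$ and therefore $\frac{\delta}{\Delta}+n-1<n$. Since the shared numerator $n\Delta-2m$ is positive, replacing the bracket $\frac{\delta}{\Delta}+n-1$ by the strictly larger quantity $n$ strictly decreases the fraction, yielding the desired strict inequality
\[
\frac{n\Delta-2m}{\left[D(n\Delta-2m)+1\right]\left[\frac{\delta}{\Delta}+n-1\right]}>\frac{n\Delta-2m}{n\left[D(n\Delta-2m)+1\right]}.
\]
I do not anticipate a genuine obstacle, since the argument is a specialization plus a monotonicity comparison of fractions; the only points that require care are verifying $n\Delta-2m>0$ and $\delta<\Delta$, both of which are exactly what irregularity provides and both of which are essential for keeping the directions of the inequalities correct.
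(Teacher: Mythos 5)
Your proposal is correct and matches the paper's own route: the paper obtains this corollary precisely by setting $k=2$ in Theorem \ref{jieguo1}, and the second inequality follows, as you argue, from $\frac{\delta}{\Delta}+n-1<n$ (using $\delta<\Delta$) together with the positivity of $n\Delta-2m$ guaranteed by irregularity. Your verification of the positivity conditions is a welcome bit of care that the paper leaves implicit.
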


\begin{cor}\label{graphtwo}
Let $G$ be an irregular connected graph with $n$ vertices and $m$ edges, $D$ is the diameter of $G$, then
\[
\Delta- \rho(G)> \Delta - \frac{2m\Delta D(n\Delta-2m)+2m\Delta}{ 2m D(n\Delta-2m)+n\Delta}>\frac{n\Delta-2m}{n(D(n\Delta-2m)+1)}.
\]
\end{cor}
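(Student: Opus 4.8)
The plan is to obtain this corollary directly from Theorem \ref{jieguo2} by setting $k=2$, and then to verify the second strict inequality by an elementary algebraic reduction. Since an ordinary graph is a $2$-uniform hypergraph, Theorem \ref{jieguo2} applies with $k=2$; substituting $k-1=1$ and $km=2m$ into its bound yields
\[
\rho(G)<\frac{2m\Delta D(n\Delta-2m)+2m\Delta}{2mD(n\Delta-2m)+n\Delta},
\]
and subtracting both sides from $\Delta$ gives the first inequality of the corollary at once.

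For the second inequality, I would abbreviate $A=D(n\Delta-2m)$. The first point to record is that irregularity forces $\sum_{i} d_i = 2m < n\Delta$, so $n\Delta-2m>0$; together with $D\ge 1$ this gives $A>0$ and keeps every denominator below strictly positive. Placing the middle expression over the common denominator $2mA+n\Delta$ simplifies it:
\[
\Delta-\frac{2m\Delta(A+1)}{2mA+n\Delta}=\frac{\Delta(2mA+n\Delta)-2m\Delta(A+1)}{2mA+n\Delta}=\frac{\Delta(n\Delta-2m)}{2mA+n\Delta}.
\]

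It then remains to check $\frac{\Delta(n\Delta-2m)}{2mA+n\Delta}>\frac{n\Delta-2m}{n(A+1)}$. Cancelling the positive factor $n\Delta-2m$ and cross-multiplying (all denominators are positive) reduces this to $\Delta n(A+1)>2mA+n\Delta$, equivalently $n\Delta A>2mA$, and, after dividing by $A>0$, to $n\Delta>2m$ --- which is exactly the irregularity inequality already recorded. Thus the full chain holds. The only genuine checkpoint in this argument is establishing $n\Delta-2m>0$ from irregularity, since it both guarantees strictness and legitimizes dividing through by $n\Delta-2m$ and by $A$; everything else is routine simplification.
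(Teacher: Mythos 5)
Your proposal is correct and follows the paper's intended route exactly: the paper obtains this corollary by specializing Theorem \ref{jieguo2} to $k=2$, and then asserts (without writing out the algebra) that the resulting bound improves the Cioab\u{a}--Gregory--Nikiforov bound (3.6). Your explicit simplification of the middle term to $\frac{\Delta(n\Delta-2m)}{2mD(n\Delta-2m)+n\Delta}$ and the reduction of the second inequality to $n\Delta>2m$ (which irregularity guarantees) supplies precisely the verification the paper leaves implicit, so nothing is missing.
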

\noindent
\textbf{Remark:} For a connected irregular graph $G$ with $n$ vertices and $m$ edges, Cioab\u{a}, Gregory and Nikiforov \cite{Cioaba} obtain the following bound
\begin{align}\label{3.6}
\Delta-\rho(G)>\frac{n\Delta-2m}{n(D(n\Delta-2m)+1)}.\tag{3.6}
\end{align}
Clearly, the results of Corollary\ref{graphone} and Corollary \ref{graphtwo} improve bound (\ref{3.6}).

\vspace{3mm}
\noindent
\textbf{Acknowledgements.}

\vspace{3mm}
This work is supported by the National Natural Science Foundation of China (No. 11371109), the Natural Science Foundation of the Heilongjiang Province (No. QC2014C001) and the Fundamental Research Funds for the Central Universities.

\vspace{3mm}
\noindent
\textbf{References}

\end{spacing}
\end{document}